\newtheorem{theorem}{Theorem}
\newtheorem*{theorem*}{Theorem}
\newtheorem{corollary}[theorem]{Corollary}
\newtheorem{lemma}[theorem]{Lemma}
\theoremstyle{definition}
\newtheorem{definition}[theorem]{Definition}
\newtheorem{remark}[theorem]{Remark}
\newtheorem*{question*}{Question}
\newcommand{\sll}[1]{\mkern-4mu\mathbin{/\mkern-5mu/}_{\mkern-4mu{#1}}}
\newcommand{\g}{\mathfrak{g}}
\newcommand{\rank}{\mathrm{rank}}
\renewcommand{\exp}{\mathrm{exp}}
\numberwithin{equation}{section}
\title[Abelianization and the Duistermaat--Heckman theorem]{Abelianization and the Duistermaat--Heckman theorem}
\author[Peter Crooks]{Peter Crooks}
\author[Jonathan Weitsman]{Jonathan Weitsman}
\address[Peter Crooks]{Department of Mathematics and Statistics\\ Utah State University \\ 3900 Old Main Hill \\ Logan, UT 84322, USA}
\email{peter.crooks@usu.edu}
\address[Jonathan Weitsman]{Department of Mathematics \\ Northeastern University \\ 360 Huntington Avenue \\ Boston, MA 02115, USA}
\email{j.weitsman@northeastern.edu}
\subjclass{53D20 (primary); 17B80 (secondary)}
\keywords{Duistermaat--Heckman measure, Gelfand--Cetlin system, symplectic quotient}
\begin{document}

\begin{abstract}
Fix a compact connected Lie group $G$ with Lie algebra $\g$, as well as a strong Gelfand--Cetlin datum on $\g^*$. Let us also fix a connected symplectic manifold $M$ endowed with an effective Hamiltonian $G$-action and proper moment map. We associate to such information a measure on $\mathbb{R}^{\mathrm{b}}$, where $\mathrm{b}=\frac{1}{2}(\dim G+\rank\hspace{2pt}G)$. We also express the Radon--Nikodym derivative of this measure in terms of the volumes of the symplectic quotients of $M$ by $G$, and thereby prove a non-abelian version of the Duistermaat--Heckman theorem.        
\end{abstract}

\maketitle
\begin{scriptsize}
\tableofcontents
\end{scriptsize}

\section{Introduction}
\subsection{The Duistermaat--Heckman theorem}\label{Subsection: First}
Let $(M,\omega)$ be a $2n$-dimensional connected symplectic manifold. The volume form $\frac{1}{n!}\omega^n$ then determines a measure $\lambda_M$ on $M$. Let us also suppose that $M$ comes equipped with an effective Hamiltonian action of a compact torus $T$, and that this action admits a proper moment map $\mu:M\longrightarrow\mathfrak{t}^*$. One may endow $T$ with the normalized Haar measure for which $T$ has unit volume; this gives rise to a Lebesgue measure on $\mathfrak{t}$, which in turn induces a dual Lebesgue measure $\lambda_{\mathfrak{t}^*}$ on $\mathfrak{t}^*$. A second measure on $\mathfrak{t}^*$ may be obtained by pushing $\lambda_M$ forward along $\mu$; this new measure $\mathrm{DH}_{\mathfrak{t}^*}\coloneqq\mu_{*}\lambda_M$ is defined by
$$\mathrm{DH}_{\mathfrak{t}^*}(B)=\int_{\mu^{-1}(B)}\frac{1}{n!}\omega^n$$ for all Borel subsets $B\subset\mathfrak{t}^*$. 

Duistermaat and Heckman \cite{DuistermaatHeckman} relate $\mathrm{DH}_{\mathfrak{t}^*}$ to $\lambda_{\mathfrak{t}^*}$ by means of the symplectic quotients
$$M\sll{\xi}T\coloneqq\mu^{-1}(\xi)/T,$$
where $\xi\in\mathfrak{t}^*$ ranges over the regular values of $\mu$. Each such quotient is a compact symplectic orbifold, and as such has a well-defined symplectic volume $\mathrm{vol}(M\sll{\xi}T)$. One version of the Duistermaat--Heckman theorem is that $$\mathrm{DH}_{\mathfrak{t}^*}=\rho_{\mathfrak{t}^*}\lambda_{\mathfrak{t}^*},$$ where $\rho_{\mathfrak{t}^*}:\mathfrak{t}^*\longrightarrow[0,\infty)$ is a Radon--Nikodym derivative of $\mathrm{DH}_{\mathfrak{t}^*}$ with respect to $\lambda_{\mathfrak{t}^*}$ satisfying $$\rho_{\mathfrak{t}^*}(\xi)=\mathrm{vol}(M\sll{\xi}T)$$ for all regular values $\xi\in\mathfrak{t}^*$ of $\mu$. The underlying argument is largely based on the existence of appropriate local normal forms for $\mu$, as presented in \cite[Equations (1.7) and (2.1)]{DuistermaatHeckman}. 

\subsection{Gelfand--Cetlin data and abelianization}
Let $G$ be a compact connected Lie group with Lie algebra $\g$. Our paper \cite{CrooksWeitsman} defines what it means for a continuous map $\nu_{\text{big}}:\g^*\longrightarrow\mathbb{R}_{\text{big}}$ and open dense subset $\g^*_{\text{s-reg}}\subset\g^*$ to form a \textit{Gelfand--Cetlin datum} on $\g^*$, where $$\mathrm{b}\coloneqq\frac{1}{2}(\dim G+\rank\hspace{2pt}G)\quad\text{and}\quad\mathbb{R}_{\text{big}}\coloneqq\mathbb{R}^{\mathrm{b}}.$$ Among other things, $\nu_{\text{big}}\big\vert_{\g^*_{\text{s-reg}}}$ is a moment map for a Hamiltonian action of $$\mathbb{T}_{\text{big}}\coloneqq\operatorname{U}(1)^{\mathrm{b}}$$ on the open Poisson submanifold $\g^*_{\text{s-reg}}\subset\g^*$; further details are provided in \cite[Definition 1]{CrooksWeitsman}. The works of Guillemin--Sternberg \cite{GuilleminSternbergGC,GuilleminSternbergThimm} and Hoffman--Lane \cite{Lane} imply that Gelfand--Cetlin on $\g^*$ exist.

Now suppose that $M$ is a Hamiltonian $G$-space with moment map $\mu:M\longrightarrow\g^*$, and that $(\nu_{\text{big}},\g^*_{\text{s-reg}})$ is a Gelfand--Cetlin datum on $\g^*$. The torus $\mathbb{T}_{\text{big}}$ then acts in a Hamiltonian fashion on the open subset $M_{\text{s-reg}}\coloneqq\mu^{-1}(\g^*_{\text{s-reg}})\subset M$ with moment map  $(\nu_{\text{big}}\circ\mu)\big\vert_{M_{\text{s-reg}}}$. The main result of \cite{CrooksWeitsman} is the following \textit{abelianization theorem} for generic symplectic quotients of $M$ by $G$: there is a canonical isomorphism $$M\sll{\xi}G\cong M_{\text{s-reg}}\sll{\nu_{\text{big}}(\xi)}\mathbb{T}_{\text{big}}$$ of stratified symplectic spaces \cite{SjamaarLerman} for each $\xi\in\g^*_{\text{s-reg}}$. 

\subsection{A non-abelian Duistermaat--Heckman theorem}
The purpose of this paper is to show that Gelfand--Cetlin data and the abelianization theorem from \cite{CrooksWeitsman} give rise to a generalization of the Duistermaat--Heckman theorem. We take $M$ to be a connected symplectic manifold equipped with an effective Hamiltonian $G$-action and proper moment map $\mu:M\longrightarrow\g^*$. We also fix a \textit{strong} Gelfand--Cetlin datum (Definition \ref{Definition: Second}) on $\g^*$. The continuous map $$\mu_{\text{big}}\coloneqq\nu_{\text{big}}\circ\mu:M\longrightarrow\mathbb{R}_{\text{big}}$$ is proper and gives rise to a measure $$\mathrm{DH}_{\text{big}}\coloneqq(\mu_{\text{big}})_*\lambda_M$$ on $\mathbb{R}_{\text{big}}$. On the other hand, normalizing the Haar measure on $\mathbb{T}_{\text{big}}$ to give a unit volume induces a Lebesgue measure $\lambda_{\text{big}}$ on $\mathbb{R}_{\text{big}}=\mathrm{Lie}(\mathbb{T}_{\text{big}})^*$. 

To compare $\mathrm{DH}_{\text{big}}$ and $\lambda_{\text{big}}$, we let $\g^*_{\mu,\text{s-reg}}$ denote locus of the regular elements of $\mu$ lying in $\g^*_{\text{s-reg}}$. The subset
$$\mathrm{U}_{\text{big}}\coloneqq\nu_{\text{big}}(\g^*_{\mu,\text{s-reg}})\subset\mathbb{R}_{\text{big}}$$ is open, and allows us to state the following comparison between $\mathrm{DH}_{\text{big}}$ and $\lambda_{\text{big}}$; this is the main result of our paper.

\begin{theorem*}
There is a Radon--Nikodym derivative $\rho_{\emph{big}}:\mathrm{U}_{\emph{big}}\longrightarrow[0,\infty)$ of $\mathrm{DH}_{\emph{big}}$ on $\mathrm{U}_{\emph{big}}$ with respect to $\lambda_{\emph{big}}$, and it satisfies $$\rho_{\emph{big}}(\lambda_{\emph{big}}(\xi))=\mathrm{vol}(M\sll{\xi}G)$$ for all regular values $\xi\in\g^*_{\emph{s-reg}}$ of $\mu$.
\end{theorem*}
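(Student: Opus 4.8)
The plan is to derive the stated non-abelian Duistermaat--Heckman formula from the classical abelian theorem of Subsection~\ref{Subsection: First}, applied to the Hamiltonian $\mathbb{T}_{\text{big}}$-action on the open submanifold $M_{\text{s-reg}}=\mu^{-1}(\g^*_{\text{s-reg}})\subseteq M$, and then to transport the resulting reduced-space volumes across the abelianization isomorphism $M\sll{\xi}G\cong M_{\text{s-reg}}\sll{\nu_{\text{big}}(\xi)}\mathbb{T}_{\text{big}}$. Since $\mu_{\text{big}}=\nu_{\text{big}}\circ\mu$ is proper, the measure $\mathrm{DH}_{\text{big}}$ is well defined and locally finite, so the problem is to identify its Radon--Nikodym derivative on $\mathrm{U}_{\text{big}}$. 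I would begin by recording that, for a regular value $\xi\in\g^*_{\text{s-reg}}$ of $\mu$, the point $\eta\coloneqq\nu_{\text{big}}(\xi)$ is a regular value of the torus moment map $\mu_{\text{big}}\big\vert_{M_{\text{s-reg}}}$; this is precisely what makes the right-hand side of the abelianization isomorphism a genuine symplectic orbifold, and it identifies $\mathrm{U}_{\text{big}}$ with a set of regular values of $\mu_{\text{big}}\big\vert_{M_{\text{s-reg}}}$.

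The central step --- and the one I expect to be the main obstacle --- is to show that $\g^*_{\text{s-reg}}$ is saturated with respect to $\nu_{\text{big}}$ over $\mathrm{U}_{\text{big}}$, in the sense that $\nu_{\text{big}}^{-1}(\mathrm{U}_{\text{big}})\subseteq\g^*_{\text{s-reg}}$, and hence $\mu_{\text{big}}^{-1}(\mathrm{U}_{\text{big}})\subseteq M_{\text{s-reg}}$. Granting this, the locus $\mu^{-1}(\g^*\setminus\g^*_{\text{s-reg}})$ contributes nothing to $\mathrm{DH}_{\text{big}}$ over $\mathrm{U}_{\text{big}}$, so that on $\mathrm{U}_{\text{big}}$ the measure $\mathrm{DH}_{\text{big}}$ coincides exactly with the pushforward of $\lambda_{M_{\text{s-reg}}}$ along $\mu_{\text{big}}\big\vert_{M_{\text{s-reg}}}$. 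To establish the saturation I would exploit the fibered structure of a Gelfand--Cetlin datum: $\nu_{\text{big}}$ comprises the $\rank G$ invariant (Casimir) functions, whose regular common level sets are single regular coadjoint orbits, together with $\frac12(\dim G-\rank G)$ action variables that parametrize the Gelfand--Cetlin polytope of each such orbit. A value $\eta\in\mathrm{U}_{\text{big}}=\nu_{\text{big}}(\g^*_{\mu,\text{s-reg}})$ has Casimir part pinning down a regular orbit and action part lying in the interior of that orbit's polytope; since the whole fibre $\nu_{\text{big}}^{-1}(\eta)$ then lies on that single orbit and over an interior polytope point, it consists of strongly regular elements. The strong Gelfand--Cetlin hypothesis (Definition~\ref{Definition: Second}) is what guarantees that the Casimirs separate the relevant regular orbits and that interior polytope values are never attained on the singular locus; verifying these separation and interiority properties is the technical heart of the argument.

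With this reduction in place, I would invoke the Duistermaat--Heckman theorem for the Hamiltonian $\mathbb{T}_{\text{big}}$-action on $M_{\text{s-reg}}$ with moment map $\mu_{\text{big}}\big\vert_{M_{\text{s-reg}}}$. The containment $\mu_{\text{big}}^{-1}(\mathrm{U}_{\text{big}})\subseteq M_{\text{s-reg}}$ shows that this restricted map is proper over $\mathrm{U}_{\text{big}}$, because the preimage of a compact $K\subseteq\mathrm{U}_{\text{big}}$ equals the compact set $\mu_{\text{big}}^{-1}(K)$; moreover the reduced space $M_{\text{s-reg}}\sll{\eta}\mathbb{T}_{\text{big}}$ is compact, since the abelianization isomorphism identifies it with $M\sll{\xi}G=\mu^{-1}(\xi)/G$, which is compact by properness of $\mu$ and compactness of $G$. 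The abelian theorem then furnishes a Radon--Nikodym derivative $\rho_{\text{big}}$ of $\mathrm{DH}_{\text{big}}$ with respect to $\lambda_{\text{big}}$ on $\mathrm{U}_{\text{big}}$, represented by the locally polynomial function satisfying $\rho_{\text{big}}(\eta)=\mathrm{vol}\big(M_{\text{s-reg}}\sll{\eta}\mathbb{T}_{\text{big}}\big)$ at every regular value $\eta\in\mathrm{U}_{\text{big}}$; here the normalization of $\lambda_{\text{big}}$ by unit-volume Haar measure on $\mathbb{T}_{\text{big}}$ is exactly the one under which this density is the symplectic volume, as in Subsection~\ref{Subsection: First}.

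Finally, I would transport volumes across the abelianization isomorphism $M\sll{\xi}G\cong M_{\text{s-reg}}\sll{\nu_{\text{big}}(\xi)}\mathbb{T}_{\text{big}}$ of stratified symplectic spaces. Because this identification respects the symplectic forms on the strata, it preserves symplectic volume, so that $\mathrm{vol}\big(M_{\text{s-reg}}\sll{\nu_{\text{big}}(\xi)}\mathbb{T}_{\text{big}}\big)=\mathrm{vol}(M\sll{\xi}G)$ for every regular value $\xi\in\g^*_{\text{s-reg}}$ of $\mu$. Combining this with the density formula of the previous step yields $\rho_{\text{big}}(\nu_{\text{big}}(\xi))=\mathrm{vol}(M\sll{\xi}G)$, which is the assertion of the theorem. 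The only genuinely delicate input is the saturation property of $\g^*_{\text{s-reg}}$; once it is available, the remainder is a packaging of the classical Duistermaat--Heckman theorem together with the already-established abelianization isomorphism.
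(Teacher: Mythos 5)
Your proposal is correct and takes essentially the same route as the paper's proof of Theorem \ref{Theorem: Main theorem}: show that $\nu_{\text{big}}^{-1}(\mathrm{U}_{\text{big}})$ consists of strongly regular elements that are regular values of $\mu$, run the abelian Duistermaat--Heckman argument for $\mu_{\text{big}}$ over $\mathrm{U}_{\text{big}}$, and convert the torus-reduced volumes into $\mathrm{vol}(M\sll{\xi}G)$ via the abelianization isomorphism of Theorem \ref{Theorem: Abelianization}. The only discrepancy is one of emphasis: the saturation step you flag as the technical heart is exactly the paper's Lemma \ref{Lemma: Useful}, which follows in a few lines from conditions (vi) and (viii) of Definition \ref{Definition: Second} together with the $G$-equivariance of $\mu$, so no separate analysis of Gelfand--Cetlin polytope interiors is needed.
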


If $G$ is a torus, then one can choose a strong Gelfand--Cetlin datum in such a way that $\g^*_{\text{s-reg}}=\g^*$, $\nu_{\text{big}}:\g^*\longrightarrow\mathbb{R}_{\text{big}}$ is a vector space isomorphism, and $\mathrm{U}_{\text{big}}$ is the locus of regular values of $\mu$ in $\mathbb{R}_{\text{big}}=\g^*$. This makes it clear that $\rho_{\text{big}}$ coincides with Duistermaat and Heckman's Radon--Nikodym derivative on regular values of $\mu$. In other words, our result may be interpreted as a generalization of the Duistermaat--Heckman theorem into the realm of non-abelian group actions.

\subsection{Recovering a description of a measure on $\mathfrak{t}_{+}^*$}
Let $T\subset G$ be a maximal torus with Lie algebra $\mathfrak{t}\subset\g$. By choosing positive roots and a $G$-invariant inner product on $\g$, one obtains an inclusion $\mathfrak{t}^*\subset\g^*$ and fundamental Weyl chamber $\mathfrak{t}_{+}^*\subset\mathfrak{t}^*$. Restrict the measure $\lambda_{\mathfrak{t}^*}$ from Section \ref{Subsection: First} to a measure $\lambda_{\mathfrak{t}_{+}^*}$ on $\mathfrak{t}_{+}^*$. Let us also consider the adjoint quotient map $\pi:\g^*\longrightarrow\mathfrak{t}_{+}^*$.

Now let $M$ be a connected symplectic manifold on which $G$ acts effectively and in a Hamiltonian fashion with proper moment map $\mu:M\longrightarrow\g^*$. The measure $$\mathrm{DH}_{\mathfrak{t}_{+}^*}\coloneqq(\pi\circ\mu)_*\lambda_M$$ on $\mathfrak{t}_{+}^*$ is another non-abelian variant of the Duistermaat--Heckman measure, and has received some attention in the literature \cite{GuilleminPrato,CDK,ZJW,GLS}. 

We use the main result of this paper to prove that $\mathrm{DH}_{\mathfrak{t}_{+}^*}$ has a Radon--Nikodym derivative of $$\rho_{\mathfrak{t}_{+}^*}(\xi)=\mathrm{vol}(\mathcal{O}_{\xi})\cdot\mathrm{vol}(M\sll{\xi}G)$$ with respect to $\lambda_{\mathfrak{t}_{+}^*}$ on a suitable open subset of $\mathfrak{t}_{+}^*$, where $\mathcal{O}_{\xi}\subset\g^*$ denotes the coadjoint orbit of $G$ through $\xi\in\mathfrak{t}_{+}^*$ and $\mathrm{vol}(\mathcal{O}_{\xi})$ denotes its symplectic volume. This fact is presumably well-known and/or readily deducible from existing results. 

\subsection{Organization}
The Lie-theoretic foundations of our paper are outlined in Section \ref{Subsection: Lie theory}. This creates the appropriate context for Section \ref{Subsection: Gelfand--Cetlin data and abelianization}, in which we recall the pertinent parts of \cite{CrooksWeitsman}. Strong Gelfand--Cetlin data are then defined and briefly discussed in Section \ref{Subsection: Strong Gelfand--Cetlin data}. In Section \ref{Subsection: Non-abelian Duistermaat--Heckman measures}, we give some context for the non-abelian Duistermaat--Heckman measures $\mathrm{DH}_{\text{big}}$ and $\mathrm{DH}_{\mathfrak{t}_{+}^*}$ mentioned above. The Radon--Nikodym derivatives of $\mathrm{DH}_{\text{big}}$ and $\mathrm{DH}_{\mathfrak{t}_{+}^*}$ are then derived in Section \ref{Subsection: Radon--Nikodym derivatives}.  

\subsection*{Acknowledgements} The authors are grateful to Megumi Harada for encouraging us to pursue this work. P.C. is supported by a Utah State University startup grant, while J.W. is supported by Simons Collaboration Grant \# 579801.

\section{Main results}
\subsection{Lie theory}\label{Subsection: Lie theory}
Let  $G$ be a compact connected Lie group of rank $\ell$ with Lie algebra $\g$ and exponential map $\exp:\g\longrightarrow G$. Write $G_{\xi}\subset G$ and $\g_{\xi}\subset\g$ for the centralizers of $\xi\in\g^*$ with respect to the coadjoint representations of $G$ and $\g$, respectively, and $\mathcal{O}_{\xi}\subset\g^*$ for the coadjoint orbit of $G$ through $\xi$. The latter centralizer is then the Lie algebra of the former. On the other hand, let us consider the locus of regular elements
$$\g^*_{\text{reg}}\coloneqq\{\xi\in\g^*:\dim\g_{\xi}=\ell\}.$$ One knows that $\xi\in\g^*$ is regular if and only if $G_{\xi}$ is a maximal torus of $G$. In this case, $$\Lambda_{\xi}\coloneqq\frac{1}{2\pi}\mathrm{ker}\left(\exp\big\vert_{\g_{\xi}}\right)\subset\g_{\xi}$$ is a free $\mathbb{Z}$-submodule of rank $\ell$.

Fix a maximal torus $T\subset G$ with Lie algebra $\mathfrak{t}\subset\g$, and choose a set $\Phi_{+}\subset\mathfrak{t}^*$ of positive roots. One thereby obtains the fundamental Weyl chamber
$$\mathfrak{t}_{+}\coloneqq\{x\in\mathfrak{t}:\alpha(x)\geq 0\text{ for all }\alpha\in\Phi_{+}\}\subset\mathfrak{t}.$$ Let us also choose a $G$-invariant inner product on $\g$, and use it to identify the adjoint and coadjoint representations of $G$. This identification allows one to identify $\mathfrak{t}_{+}\subset\g$ with a fundamental domain $\mathfrak{t}_{+}^*\subset\g^*$ for the coadjoint action. The \textit{sweeping map} $\pi:\g^*\longrightarrow\mathfrak{t}_{+}^*$ is then defined by the property that $\{\pi(\xi)\}=\mathcal{O}_{\xi}\cap\mathfrak{t}_{+}^*$ for all $\xi\in\g^*$; its fibers are precisely the coadjoint orbits of $G$.

\subsection{Gelfand--Cetlin data and abelianization}\label{Subsection: Gelfand--Cetlin data and abelianization}
Let us set
$$\mathrm{u}\coloneqq\frac{1}{2}(\dim\g-\ell)\quad\text{and}\quad\mathrm{b}\coloneqq\frac{1}{2}(\dim\g+\ell),$$ and define the three tori $$\mathbb{T}_{\text{small}}\coloneqq\operatorname{U}(1)^{\ell},\quad\mathbb{T}_{\text{int}}\coloneqq\operatorname{U}(1)^{\mathrm{u}},\quad\text{and}\quad \mathbb{T}_{\text{big}}\coloneqq\mathbb{T}_{\text{small}}\times\mathbb{T}_{\text{int}}\cong\operatorname{U}(1)^{\mathrm{b}}.$$ By identifying $\mathbb{R}$ with $i\mathbb{R}$ in the usual way, we may regard 
$$\mathbb{R}_{\text{small}}\coloneqq\mathbb{R}^{\ell},\quad\mathbb{R}_{\text{int}}\coloneqq\mathbb{R}^{\mathrm{u}},\quad\text{and}\quad\mathbb{R}_{\text{big}}\coloneqq\mathbb{R}_{\text{small}}\times\mathbb{R}_{\text{int}}\cong\mathbb{R}^{\mathrm{b}}$$ as the duals of the Lie algebras of $\mathbb{T}_{\text{small}}$, $\mathbb{T}_{\text{int}}$, and $\mathbb{T}_{\text{big}}$, respectively.

The following is a rephrased version of \cite[Definition 1]{CrooksWeitsman}.

\begin{definition}\label{Definition: Main definition}
A \textit{Gelfand--Cetlin datum} $(\nu_{\text{big}},\g^*_{\text{s-reg}})$ consists of a continuous map $\nu_{\text{big}}=(\nu_1,\ldots,\nu_{\mathrm{b}}):\g^*\longrightarrow\mathbb{R}_{\text{big}}$ and open dense subset $\g^*_{\text{s-reg}}\subset\g_{\text{reg}}^*$ that satisfy the following conditions:
\begin{itemize}
\item[\textup{(i)}] $\nu_1,\ldots,\nu_{\ell}$ are $G$-invariant on $\g^*$ and smooth on $\g^*_{\text{reg}}$;
\item[\textup{(ii)}] the set of differentials $\{\mathrm{d}_{\xi}\nu_1,\ldots,\mathrm{d}_{\xi}\nu_{\ell}\}$ is a $\mathbb{Z}$-basis of $\Lambda_{\xi}\subset\g_{\xi}$ for all $\xi\in\g^*_{\text{reg}}$;
\item[\textup{(iii)}] $\nu_{\text{big}}\big\vert_{\g^*_{\text{s-reg}}}:\g^*_{\text{s-reg}}\longrightarrow\mathbb{R}_{\text{big}}$ is a smooth submersion and moment map for a Poisson Hamiltonian $\mathbb{T}_{\text{big}}$-space structure on $\g^*_{\text{s-reg}}$;
\item[\textup{(iv)}] $\nu_{\text{big}}\big\vert_{\g^*_{\text{s-reg}}}:\g^*_{\text{s-reg}}\longrightarrow\nu_{\text{big}}(\g^*_{\text{s-reg}})$ is a principal $\mathbb{T}_{\text{int}}$-bundle;
\item[\textup{(v)}] if $M$ is a Hamiltonian $G$-space with moment map $\mu:M\longrightarrow\g^*$, then $$(\nu_{\text{big}}\circ\mu)\big\vert_{\mu^{-1}(\g^*_{\text{s-reg}})}: \mu^{-1}(\g^*_{\text{s-reg}})\longrightarrow\mathbb{R}_{\text{big}}$$ is a moment map for a Hamiltonian $\mathbb{T}_{\text{big}}$-space structure on $\mu^{-1}(\g_{\text{s-reg}})$.
\end{itemize} 
In this case, we will write $$\nu_{\text{small}}\coloneqq (\lambda_1,\ldots,\lambda_{\ell}):\g^*\longrightarrow\mathbb{R}_{\text{small}},\quad\nu_{\text{int}}\coloneqq(\lambda_{\ell+1},\ldots,\lambda_{\mathrm{b}}):\g^*\longrightarrow\mathbb{R}_{\text{int}},$$ $$M_{\text{s-reg}}\coloneqq\mu^{-1}(\g^*_{\text{s-reg}}),\quad\text{and}\quad\mu_{\text{big}}\coloneqq\nu_{\text{big}}\circ\mu:M\longrightarrow\mathbb{R}_{\text{big}}.$$
\end{definition}

\begin{remark}\label{Remark: Torus GC}
Suppose that $G$ is a torus, i.e. $G=T$ and $\g=\mathfrak{t}$. Let $\{\nu_1,\ldots,\nu_{\ell}\}$ be a $\mathbb{Z}$-basis of $\frac{1}{2\pi}\mathrm{ker}\left(\exp:\mathfrak{g}\longrightarrow G\right)\subset\mathfrak{g}$. It is straightforward to check that $\nu_{\text{big}}\coloneqq(\nu_1,\ldots,\nu_{\ell}):\g^*\longrightarrow\mathbb{R}^{\ell}$ and $\g^*_{\text{s-reg}}\coloneqq\g^*$ form a Gelfand--Cetlin datum.
\end{remark}

\begin{remark}
The situation is considerably more subtle if one requires $G$ to be simple. Examples of Gelfand--Cetlin data then include Guillemin and Sternberg's Gelfand--Cetlin systems on the special unitary and special orthogonal Lie algebras \cite{GuilleminSternbergGC,GuilleminSternbergThimm}, as well as Hoffman and Lane's recent generalizations of these systems to arbitrary Lie type \cite{Lane}.
\end{remark}

The main result in \cite{CrooksWeitsman} is the following comparison between the symplectic quotients of a Hamiltonian $G$-space $M$ and those of the Hamiltonian $\mathbb{T}_{\text{big}}$-space $M_{\text{s-reg}}$.

\begin{theorem}\label{Theorem: Abelianization}
Let $(\nu_{\emph{big}},\g^*_{\emph{s-reg}})$ be a Gelfand--Cetlin datum and $M$ a Hamiltonian $G$-space. Given any $\xi\in\g^*_{\emph{s-reg}}$, there is a canonical isomorphism $$M\sll{\xi}G\cong M_{\emph{s-reg}}\sll{\nu_{\emph{big}}(\xi)}\mathbb{T}_{\emph{big}}$$ of stratified symplectic spaces. 
\end{theorem}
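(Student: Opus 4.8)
The plan is to trade the non-abelian quotient for an abelian one in a manner dictated by the product $\mathbb{T}_{\text{big}}=\mathbb{T}_{\text{small}}\times\mathbb{T}_{\text{int}}$, with $\nu_{\text{small}}$ encoding the maximal-torus directions and $\nu_{\text{int}}$ the Gelfand--Cetlin torus inside each coadjoint orbit. First I would pass from orbit reduction to point reduction: since $\xi\in\g^*_{\text{s-reg}}\subseteq\g^*_{\text{reg}}$ is regular, its stabilizer $G_\xi$ is a maximal torus $T$, and the standard identification $M\sll{\xi}G\cong\mu^{-1}(\xi)/G_\xi$ holds as stratified symplectic spaces \cite{SjamaarLerman}. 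On the other side, condition (iv) of Definition \ref{Definition: Main definition} shows that the fiber $L_\xi\coloneqq\nu_{\text{big}}^{-1}(\nu_{\text{big}}(\xi))\cap\g^*_{\text{s-reg}}$ is a single free $\mathbb{T}_{\text{int}}$-orbit $\mathbb{T}_{\text{int}}\cdot\xi$, so that the right-hand side becomes $\mu^{-1}(L_\xi)/\mathbb{T}_{\text{big}}$ with $\mu^{-1}(\xi)\subseteq\mu^{-1}(L_\xi)$. The theorem then amounts to showing that the inclusion $\mu^{-1}(\xi)\hookrightarrow\mu^{-1}(L_\xi)$ descends to an isomorphism $j\colon\mu^{-1}(\xi)/G_\xi\too\mu^{-1}(L_\xi)/\mathbb{T}_{\text{big}}$.

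The crux is to pin down the two residual torus actions. Since each $\nu_i$ with $1\le i\le\ell$ is $G$-invariant, the collective Hamiltonian $\nu_i\circ\mu$ has Hamiltonian vector field equal to the infinitesimal $G$-action of $\d_{\mu(m)}\nu_i\in\g$; at a point $m\in\mu^{-1}(\xi)$ this differential lies in $\g_\xi=\Lie(T)$, so the $\mathbb{T}_{\text{small}}$-flows preserve $\mu^{-1}(\xi)$ and act there through $\exp(t\,\d_\xi\nu_i)\in G_\xi$. Condition (ii), that $\{\d_\xi\nu_1,\ldots,\d_\xi\nu_\ell\}$ is a $\mathbb{Z}$-basis of $\Lambda_\xi$, then guarantees that these circles close up and generate the full torus $G_\xi=T$, so on $\mu^{-1}(\xi)$ the $\mathbb{T}_{\text{small}}$-action coincides with the $G_\xi$-action. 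Dually, the $\nu_i$ are Casimirs, so $\mathbb{T}_{\text{small}}$ acts trivially on $\g^*_{\text{s-reg}}$, while $\mathbb{T}_{\text{int}}$ acts freely on $L_\xi$ by (iv). Because $\mu$ is a Poisson map to the open Poisson submanifold $\g^*_{\text{s-reg}}$, it intertwines the flow of each $\nu_j\circ\mu$ with that of $\nu_j$; hence $\mu$ is $\mathbb{T}_{\text{big}}$-equivariant, and both facts transport to $M_{\text{s-reg}}$.

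With these identifications, the bijectivity of $j$ is a bookkeeping argument. For surjectivity, given $m\in\mu^{-1}(L_\xi)$ I would write $\mu(m)=\tau\cdot\xi$ with $\tau\in\mathbb{T}_{\text{int}}$, so that $\tau^{-1}\cdot m\in\mu^{-1}(\xi)$ lies in the same $\mathbb{T}_{\text{big}}$-orbit. For injectivity, if $m,m'\in\mu^{-1}(\xi)$ satisfy $m'=\theta_s\theta_i\cdot m$ with $\theta_s\in\mathbb{T}_{\text{small}}$ and $\theta_i\in\mathbb{T}_{\text{int}}$, then applying $\mu$ and using triviality of the $\mathbb{T}_{\text{small}}$-action on $\g^*_{\text{s-reg}}$ yields $\theta_i\cdot\xi=\xi$, whence $\theta_i=e$ by freeness; the surviving factor $\theta_s$ acts through $G_\xi$, so $m$ and $m'$ share a $G_\xi$-orbit. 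That $j$ is a homeomorphism follows since both quotient maps are proper and open on the relevant loci.

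Finally I would match the stratified symplectic structures by pulling back to $\mu^{-1}(\xi)$. Writing $q\colon\mu^{-1}(L_\xi)\too\mu^{-1}(L_\xi)/\mathbb{T}_{\text{big}}$ and $p\colon\mu^{-1}(\xi)\too\mu^{-1}(\xi)/G_\xi$ for the quotient maps and $a\colon\mu^{-1}(\xi)\hookrightarrow\mu^{-1}(L_\xi)$ for the inclusion, the defining property of the Sjamaar--Lerman reduced form gives $a^{*}q^{*}\omega_{\text{red}}^{\mathrm{R}}=\omega\vert_{\mu^{-1}(\xi)}=p^{*}\omega_{\text{red}}^{\mathrm{L}}$ on each stratum; since $q\circ a=j\circ p$ and $p$ is a surjective submersion onto its stratum, this forces $j^{*}\omega_{\text{red}}^{\mathrm{R}}=\omega_{\text{red}}^{\mathrm{L}}$. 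The main obstacle, I expect, is not the principal stratum but compatibility across the whole singular stratification: one must check that $j$ carries the orbit-type stratification of $\mu^{-1}(\xi)/G_\xi$ to that of $\mu^{-1}(L_\xi)/\mathbb{T}_{\text{big}}$ and that the reduced forms agree stratum by stratum, which requires comparing local normal forms for the $G_\xi$- and $\mathbb{T}_{\text{big}}$-actions and confirming the comparison is canonical. Verifying that the collective flows integrate to a genuine free $\mathbb{T}_{\text{int}}$-action on $M_{\text{s-reg}}$, rather than an infinitesimal one, is the other point deserving care, and is precisely what condition (v) is designed to supply.
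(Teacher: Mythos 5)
You should note at the outset that this paper contains no proof of Theorem \ref{Theorem: Abelianization} to compare against: the result is quoted from the companion work \cite{CrooksWeitsman}, and the present paper only uses it as an input (in the proof of Theorem \ref{Theorem: Main theorem}). Judged on its own terms, your outline is essentially correct and assembles exactly the ingredients that Definition \ref{Definition: Main definition} is designed to supply. The two load-bearing observations are right: since $\nu_1,\ldots,\nu_{\ell}$ are $G$-invariant and hence Casimirs, the collective flows of $\nu_i\circ\mu$ preserve every fiber of $\mu$ and act on $\mu^{-1}(\xi)$ by $\exp(t\,\d_{\xi}\nu_i)$, so condition (ii) identifies $\mathbb{T}_{\text{small}}$-orbits in $\mu^{-1}(\xi)$ with $G_{\xi}$-orbits via the isomorphism $\mathbb{T}_{\text{small}}\cong G_{\xi}$ determined by the $\mathbb{Z}$-basis $\{\d_{\xi}\nu_1,\ldots,\d_{\xi}\nu_{\ell}\}$ of $\Lambda_{\xi}$; and condition (iv) makes the fiber of $\nu_{\text{big}}$ through $\xi$ a single free $\mathbb{T}_{\text{int}}$-orbit, so that $\mu_{\text{big}}^{-1}(\nu_{\text{big}}(\xi))\cap M_{\text{s-reg}}=\mu^{-1}(\mathbb{T}_{\text{int}}\cdot\xi)$. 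Your surjectivity/injectivity bookkeeping and the stratum-wise pullback argument for the reduced forms are then sound, and you correctly read condition (v) as the guarantee that the collective flows integrate to a genuine $\mathbb{T}_{\text{big}}$-action.

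Two remarks on the points you flag as residual obstacles. First, the stratification matching is easier than you fear and needs no local normal forms: for $m\in\mu^{-1}(\xi)$, if $\theta_s\theta_i\cdot m=m$ then applying $\mu$ and the triviality of the $\mathbb{T}_{\text{small}}$-action on $\g^*_{\text{s-reg}}$ gives $\theta_i\cdot\xi=\xi$, hence $\theta_i=e$ by freeness of the principal $\mathbb{T}_{\text{int}}$-action, while $\theta_s$ fixes $m$ if and only if its image in $G_{\xi}$ lies in $G_m$ (note $G_m\subset G_{\xi}$ automatically, by equivariance of $\mu$). Thus $(\mathbb{T}_{\text{big}})_m$ corresponds to $G_m$ under your isomorphism, stabilizers are constant along $\mathbb{T}_{\text{int}}$-translates since the group is abelian, and your homeomorphism $j$ carries orbit-type strata to orbit-type strata; the reduced forms then agree stratum by stratum exactly by your pullback computation. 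Second, a small genuine omission: an isomorphism of stratified symplectic spaces in the sense of \cite{SjamaarLerman} must also intertwine the distinguished Poisson algebras of smooth functions on the two quotients (invariant smooth functions on the respective level sets), not merely the stratifications and the forms; this follows from the same orbit identifications together with the continuity argument you gesture at for $j^{-1}$, which should be made explicit via local sections of the principal $\mathbb{T}_{\text{int}}$-bundle of condition (iv). With those two items recorded, and the standard point-versus-orbit reduction identification $M\sll{\xi}G\cong\mu^{-1}(\xi)/G_{\xi}$ you invoke, your sketch amounts to a complete and correct proof strategy.
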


The definition of a \textit{stratified symplectic space} is due to Sjamaar--Lerman \cite{SjamaarLerman}. This strict generalization of a symplectic manifold liberates us from making assumptions to ensure that the symplectic quotients $M\sll{\xi}G$ and $M_{\text{s-reg}}\sll{\nu_{\text{big}}(\xi)}\mathbb{T}_{\text{big}}$ are genuine symplectic manifolds.

\subsection{Strong Gelfand--Cetlin data}\label{Subsection: Strong Gelfand--Cetlin data}
In order to prove the main results of this manuscript, we will need the following strengthening of Definition \ref{Definition: Main definition}.

\begin{definition}\label{Definition: Second} A Gelfand--Cetlin datum $(\nu_{\text{big}},\g^*_{\text{s-reg}})$ will be called \textit{strong} if it satisfies the following additional conditions:
\begin{itemize}
\item[(vi)] if $\xi,\eta\in\g^*$ and $\nu_{\text{small}}(\xi)=\nu_{\text{small}}(\eta)$, then $\mathcal{O}_{\xi}=\mathcal{O}_{\eta}$;
\item[(vii)] $\nu_{\text{big}}$ is proper;
\item[(viii)] $\g^*_{\text{s-reg}}$ is a union of fibers of $\nu_{\text{big}}$;
\item[(ix)] if $\xi\in\g^*_{\text{s-reg}}$, then $\mathcal{O}_{\xi}\cap\g^*_{\text{s-reg}}$ is dense in $\mathcal{O}_{\xi}$.
\end{itemize}
\end{definition}

\begin{remark}\label{Remark: Strong GC}
Examples of strong Gelfand--Cetlin data include the data described in Remark \ref{Remark: Torus GC}, as well as Guillemin and Sternberg's Gelfand--Cetlin systems on the special unitary and special orthogonal Lie algebras \cite{GuilleminSternbergGC,GuilleminSternbergThimm}. 
\end{remark}

\subsection{Non-abelian Duistermaat--Heckman measures}\label{Subsection: Non-abelian Duistermaat--Heckman measures}
Let $M$ be a connected symplectic manifold carrying an effective Hamiltonian action of $G$ and proper moment map $\mu:M\longrightarrow\g^*$. Define the $G$-invariant moment map to be the composition $$\overline{\mu}\coloneqq\pi\circ\mu:M\longrightarrow\mathfrak{t}_{+}^*.$$ Let us also suppose that $(\nu_{\text{big}},\g^*_{\text{s-reg}})$ is a strong Gelfand--Cetlin datum. The following five measures naturally arise in this context:
\begin{itemize}
\item the measure $\lambda_{\mathfrak{t}_{+}^*}$ on $\mathfrak{t}_{+}^*$ induced by normalizing the Haar measure on $T$ to give a unit volume, forming the resulting Lebesgue measure on $\mathfrak{t}$, taking the induced Lebesgue measure on $\mathfrak{t}^*$, and restricting to $\mathfrak{t}_{+}^*$;
\item the measure $\lambda_{\text{big}}$ on $\mathbb{R}_{\text{big}}$ obtained by normalizing the Haar measure on $\operatorname{U}(1)^{\mathrm{b}}$, and proceeding analogously to the above to get a measure on $\mathbb{R}_{\text{big}}=\mathrm{Lie}(\operatorname{U}(1)^{\mathrm{b}})^*$;
\item the Liouville measure $\lambda_M$ on $M$ associated to the volume form $\frac{1}{n!}\omega^n$, where $\omega$ is the symplectic form on $M$ and $2n=\dim M$;
\item the measure on $\mathfrak{t}_{+}^*$ obtained by pushing $\lambda_M$ forward along $\overline{\mu}:M\longrightarrow\mathfrak{t}_{+}^*$, i.e $\mathrm{DH}_{\mathfrak{t}_{+}^*}\coloneqq\overline{\mu}_{*}\lambda_M$.
\item the measure on $\mathbb{R}_{\text{big}}$ obtained by pushing $\lambda_M$ forward along $\mu_{\text{big}}:M\longrightarrow\mathbb{R}_{\text{big}}$, i.e $\mathrm{DH}_{\text{big}}\coloneqq(\mu_{\text{big}})_{*}\lambda_M$;
\end{itemize}

\begin{remark}\label{Remark: Torus measures}
Suppose that $G=T$ is a torus, and let all objects and notation be as discussed in Remark \ref{Remark: Torus GC}. Observe that $\mathfrak{t}_{+}^*=\g^*$, $\mathbb{R}_{\text{big}}=\mathbb{R}^{\ell}$, and that $\nu_{\text{big}}:\mathfrak{g}^*\longrightarrow\mathbb{R}_{\text{big}}$ is a vector space isomorphism. By means of the previous sentence, we may regard both $\mathrm{DH}_{\mathfrak{t}^*_{+}}$ and $\mathrm{DH}_{\text{big}}$ as measures on $\mathbb{R}^{\ell}$. These two measures are readily seen to coincide.  This coincident measure is the subject of Duistermaat and Heckman's work \cite{DuistermaatHeckman}, and so often called the \textit{Duistermaat--Heckman measure}.
\end{remark}

The topological spaces $\mathfrak{t}^*_{+}$ and $\mathbb{R}_{\text{big}}$ are homeomorphic if and only if $G$ is a torus. In particular, the measures discussed in Remark \ref{Remark: Torus measures} are defined on different spaces in the case of our arbitrary compact connected Lie group $G$. One may therefore regard $\mathrm{DH}_{\mathfrak{t}^*_{+}}$ and $\mathrm{DH}_{\text{big}}$ as two different generalizations of the classical Duistermaat--Heckman measure, each into the realm of Hamiltonian actions by non-abelian groups. This motivates the following definition.

\begin{definition}
Let $M$ be a connected Hamiltonian $G$-space with an effective $G$-action and proper moment map $\mu:M\longrightarrow\g^*$.
\begin{itemize}
\item[(i)] We call $\mathrm{DH}_{\mathfrak{t}_{+}^*}\coloneqq\overline{\mu}_{*}\lambda_M$ the \textit{non-abelian Duistermaat--Heckman measure} on $\mathfrak{t}_{+}^*$ associated to $M$.
\item[(ii)] Suppose that $(\nu_{\text{big}},\g^*_{\text{s-reg}})$ is a strong Gelfand--Cetlin datum. We call $\mathrm{DH}_{\text{big}}\coloneqq(\mu_{\text{big}})_{*}\lambda_M$ the \textit{non-abelian Duistermaat--Heckman measure} on $\mathbb{R}_{\text{big}}$ associated to $M$ and $(\nu_{\text{big}},\g^*_{\text{s-reg}})$.
\end{itemize}
\end{definition}

The measure $\mathrm{DH}_{\mathfrak{t}_{+}^*}$ and variants thereof have been studied quite extensively \cite{GuilleminPrato,CDK,ZJW,Duistermaat,GLS,PratoWu,Witten,JeffreyKirwan}.

\subsection{Radon--Nikodym derivatives}\label{Subsection: Radon--Nikodym derivatives}
Let $(\nu_{\text{big}},\g^*_{\text{s-reg}})$ be a strong Gelfand--Cetlin datum, and $M$ a connected Hamiltonian $G$-space with proper moment map $\mu:M\longrightarrow\g^*$ and effective $G$-action. Consider the open subset $$\g^*_{\mu,\text{s-reg}}\coloneqq\{\xi\in\g^*_{\text{s-reg}}:\mathrm{d}_{m}\mu\text{ is surjective for all }m\in \mu^{-1}(\xi)\}$$ of regular values of $\mu$ lying in $\g^*_{\text{s-reg}}$. 

\begin{lemma}\label{Lemma: Useful}
The subset $\g^*_{\mu,\emph{s-reg}}$ is a union of fibers of $\nu_{\emph{big}}$. Each such fiber is contained in a regular coadjoint orbit of $G$.
\end{lemma}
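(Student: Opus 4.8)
The plan is to deduce both assertions from two facts: (a) every nonempty fiber of $\nu_{\text{big}}$ is contained in a single coadjoint orbit, and (b) the set of regular values of $\mu$ is a union of coadjoint orbits. Granting these, the lemma follows by a short combination, so the real work is in establishing (a) and (b), both of which are essentially immediate from the hypotheses.

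For (a), I would argue as follows. If $\xi,\eta\in\g^*$ satisfy $\nu_{\text{big}}(\xi)=\nu_{\text{big}}(\eta)$, then in particular their first $\ell$ coordinates agree, i.e. $\nu_{\text{small}}(\xi)=\nu_{\text{small}}(\eta)$. Condition (vi) of Definition \ref{Definition: Second} then gives $\mathcal{O}_{\xi}=\mathcal{O}_{\eta}$, so $\eta\in\mathcal{O}_{\xi}$. Thus the fiber $\nu_{\text{big}}^{-1}(\nu_{\text{big}}(\xi))$ lies in $\mathcal{O}_{\xi}$. (Condition (i) of Definition \ref{Definition: Main definition} moreover makes $\nu_{\text{small}}$ constant on orbits, so these fibers are exact unions of orbits, but only the containment is needed here.)

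For (b), I would use only the $G$-equivariance of $\mu$. For $g\in G$ let $\Phi_g:M\to M$ be the action diffeomorphism and $\Psi_g:\g^*\to\g^*$ the corresponding coadjoint diffeomorphism, so that $\mu\circ\Phi_g=\Psi_g\circ\mu$. Differentiating at a point $m$ shows that $\mathrm{d}_{\Phi_g(m)}\mu$ is surjective if and only if $\mathrm{d}_m\mu$ is, since $\mathrm{d}_m\Phi_g$ and $\mathrm{d}_{\mu(m)}\Psi_g$ are isomorphisms. As $\Phi_g$ carries $\mu^{-1}(\xi)$ onto $\mu^{-1}(\Psi_g(\xi))$, it follows that $\xi$ is a regular value exactly when $\Psi_g(\xi)$ is; hence the regular values of $\mu$ form a $G$-invariant set, i.e. a union of coadjoint orbits. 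Combining the two facts proves the lemma: if $\xi\in\g^*_{\mu,\text{s-reg}}$ and $\eta\in\nu_{\text{big}}^{-1}(\nu_{\text{big}}(\xi))$, then $\eta\in\mathcal{O}_{\xi}$ by (a), so $\eta$ is a regular value of $\mu$ by (b); and $\eta\in\g^*_{\text{s-reg}}$ by condition (viii), since $\g^*_{\text{s-reg}}$ is a union of $\nu_{\text{big}}$-fibers. Thus $\eta\in\g^*_{\mu,\text{s-reg}}$, so the whole fiber through $\xi$ lies in $\g^*_{\mu,\text{s-reg}}$, which is therefore a union of fibers. For the second statement, any such fiber sits inside $\g^*_{\text{s-reg}}\subset\g^*_{\text{reg}}$, so its elements are regular; being contained in the orbit $\mathcal{O}_{\xi}$ of the regular element $\xi$, it lies in a regular coadjoint orbit.

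I do not expect a serious obstacle: the entire content is the recognition that condition (vi) collapses $\nu_{\text{big}}$-fibers into coadjoint orbits, after which regularity — a $G$-invariant notion — is automatically constant along a fiber. The only point deserving care is the standard claim that regular values of $\mu$ are $G$-invariant, which I would justify through equivariance as above; equivalently one may invoke $\mathrm{im}(\mathrm{d}_m\mu)=\g_m^{\circ}$, so that surjectivity of $\mathrm{d}_m\mu$ amounts to finiteness of the stabilizer $G_m$, which is manifestly preserved by the $G$-action.
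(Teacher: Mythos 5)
Your proof is correct and takes essentially the same route as the paper: condition (viii) keeps the fiber inside $\g^*_{\text{s-reg}}$, condition (vi) collapses each $\nu_{\text{big}}$-fiber into a single coadjoint orbit, and the $G$-equivariance of $\mu$ makes being a regular value constant along coadjoint orbits, with your only addition being an explicit verification of the equivariance step that the paper leaves implicit. (One caution about your parenthetical aside: fibers of $\nu_{\text{big}}$ itself are generally \emph{not} unions of coadjoint orbits --- only the $G$-invariant $\nu_{\text{small}}$ has $G$-invariant fibers --- but as you note, that remark plays no role in the argument.)
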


\begin{proof}
Let $\xi\in\g^*$ and $\eta\in\g^*_{\mu,\text{s-reg}}$ be such that $\nu_{\text{big}}(\xi)=\nu_{\text{big}}(\eta)$. Definition \ref{Definition: Second}(viii) implies that $\xi\in\g^*_{\text{s-reg}}$, while Definition \ref{Definition: Second}(vi) tells us that $\mathcal{O}_{\xi}=\mathcal{O}_{\eta}$. Since $\mu$ is $G$-equivariant and $\eta$ is a regular value of $\mu$, the condition $\mathcal{O}_{\xi}=\mathcal{O}_{\eta}$ forces $\xi$ to be a regular value of $\mu$. We conclude that $\xi\in\g^*_{\mu,\text{s-reg}}$. Our arguments also imply that $\nu_{\text{big}}^{-1}(\nu_{\text{big}}(\eta))\subset\mathcal{O}_{\eta}$. These last two sentences suffice to complete the proof.
\end{proof}

Now consider the image
$$\mathrm{U}_{\text{big}}\coloneqq\nu_{\text{big}}(\g^*_{\mu,\text{s-reg}})$$ of the open subset $\g^*_{\mu,\text{s-reg}}\subset\g^*_{\text{s-reg}}$ under $\nu_{\text{big}}$; it is open in $\mathbb{R}_{\text{big}}$ by Definition \ref{Definition: Main definition}(iii). Given any $x\in\mathrm{U}_{\text{big}}$ and $\xi,\eta\in\nu_{\text{big}}^{-1}(x)$, Lemma \ref{Lemma: Useful} and the $G$-equivariance of $\mu$ imply that $M\sll{\xi}G\cong M\sll{\eta}G$ as symplectic orbifolds. It follows that the symplectic volume $$\rho_{\text{big}}(x)\coloneqq\mathrm{vol}(M\sll{\xi}G)$$ does not depend on the choice of $\xi\in\nu_{\text{big}}^{-1}(x)$. We thereby obtain a well-defined function $\rho_{\text{big}}:U_{\text{big}}\longrightarrow[0,\infty)$; it will turn out to be a Radon--Nikodym derivative of the non-abelian Duistermaat--Heckman measure $\mathrm{DH}_{\text{big}}$ with respect to $\lambda_{\text{big}}$, in the following sense. 

\begin{theorem}\label{Theorem: Main theorem}
The function $\rho_{\emph{big}}$ is a Radon--Nikodym derivative of $\mathrm{DH}_{\emph{big}}$ on the open subset $\mathrm{U}_{\emph{big}}\subset\mathbb{R}_{\emph{big}}$ with respect to $\lambda_{\emph{big}}$, i.e. $\mathrm{DH}_{\emph{big}}=\rho_{\emph{big}}\lambda_{\emph{big}}$ on $\mathrm{U}_{\emph{big}}$.
\end{theorem}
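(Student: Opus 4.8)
The plan is to reduce the assertion to the classical (abelian) Duistermaat--Heckman theorem \cite{DuistermaatHeckman}, applied to the Hamiltonian $\mathbb{T}_{\text{big}}$-action on the open subset $M_{\text{s-reg}}\subset M$, and then to transport the resulting formula across the abelianization isomorphism of Theorem \ref{Theorem: Abelianization}. The first step is to observe that over $\mathrm{U}_{\text{big}}$ the measure $\mathrm{DH}_{\text{big}}$ only sees $M_{\text{s-reg}}$. Indeed, Lemma \ref{Lemma: Useful} tells us that $\g^*_{\mu,\text{s-reg}}$ is a union of fibers of $\nu_{\text{big}}$, so that $\nu_{\text{big}}^{-1}(\mathrm{U}_{\text{big}})=\g^*_{\mu,\text{s-reg}}\subset\g^*_{\text{s-reg}}$. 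Hence for any Borel set $B\subset\mathrm{U}_{\text{big}}$ we have $\mu_{\text{big}}^{-1}(B)=\mu^{-1}(\nu_{\text{big}}^{-1}(B))\subset M_{\text{s-reg}}$, and therefore $$\mathrm{DH}_{\text{big}}(B)=\lambda_M(\mu_{\text{big}}^{-1}(B))=\big((\mu_{\text{big}}\vert_{M_{\text{s-reg}}})_*\lambda_{M_{\text{s-reg}}}\big)(B),$$ where $\lambda_{M_{\text{s-reg}}}$ is the Liouville measure of the open symplectic submanifold $M_{\text{s-reg}}$. That is, on $\mathrm{U}_{\text{big}}$ the measure $\mathrm{DH}_{\text{big}}$ coincides with the abelian Duistermaat--Heckman measure of the Hamiltonian $\mathbb{T}_{\text{big}}$-space $M_{\text{s-reg}}$, whose moment map is $\mu_{\text{big}}\vert_{M_{\text{s-reg}}}$ by Definition \ref{Definition: Main definition}(v).

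Next I would check that each $x\in\mathrm{U}_{\text{big}}$ is a regular value with compact fiber. Writing $x=\nu_{\text{big}}(\xi)$ for $\xi\in\g^*_{\mu,\text{s-reg}}$, the chain rule $\mathrm{d}_m\mu_{\text{big}}=\mathrm{d}_{\mu(m)}\nu_{\text{big}}\circ\mathrm{d}_m\mu$ expresses $\mathrm{d}_m\mu_{\text{big}}$ as a composite of surjections: $\mathrm{d}_m\mu$ is surjective because $\mu(m)\in\g^*_{\mu,\text{s-reg}}$ is a regular value of $\mu$, and $\mathrm{d}_{\mu(m)}\nu_{\text{big}}$ is surjective because $\nu_{\text{big}}\vert_{\g^*_{\text{s-reg}}}$ is a submersion (Definition \ref{Definition: Main definition}(iii)). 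Thus $x$ is a regular value of $\mu_{\text{big}}\vert_{M_{\text{s-reg}}}$. Moreover $\mu_{\text{big}}$ is proper on $M$, so the fiber $\mu_{\text{big}}^{-1}(x)$ is compact; as it lies in $M_{\text{s-reg}}$, the quotient $M_{\text{s-reg}}\sll{x}\mathbb{T}_{\text{big}}$ is a compact symplectic orbifold with a well-defined symplectic volume.

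With these points established, the abelian Duistermaat--Heckman theorem --- applied with the same Haar normalization on $\mathbb{T}_{\text{big}}$ that defines $\lambda_{\text{big}}$ --- gives $\mathrm{DH}_{\text{big}}=\rho\,\lambda_{\text{big}}$ on $\mathrm{U}_{\text{big}}$, where $\rho(x)=\mathrm{vol}(M_{\text{s-reg}}\sll{x}\mathbb{T}_{\text{big}})$. Theorem \ref{Theorem: Abelianization} then supplies an isomorphism $M\sll{\xi}G\cong M_{\text{s-reg}}\sll{\nu_{\text{big}}(\xi)}\mathbb{T}_{\text{big}}$ of stratified symplectic spaces for every $\xi\in\g^*_{\text{s-reg}}$, whence $\mathrm{vol}(M_{\text{s-reg}}\sll{x}\mathbb{T}_{\text{big}})=\mathrm{vol}(M\sll{\xi}G)=\rho_{\text{big}}(x)$ for $\xi\in\nu_{\text{big}}^{-1}(x)$. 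Thus $\rho=\rho_{\text{big}}$ and the theorem follows.

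I expect the principal obstacle to be technical rather than conceptual. The restricted map $\mu_{\text{big}}\vert_{M_{\text{s-reg}}}$ need not be proper as a map out of the open manifold $M_{\text{s-reg}}$, so a global form of the abelian theorem cannot be invoked verbatim; the remedy is that over $\mathrm{U}_{\text{big}}$ the fibers of $\mu_{\text{big}}$ are compact and contained in $M_{\text{s-reg}}$ (the first step), which is exactly the input required by the local normal-form argument underlying \cite{DuistermaatHeckman} near a regular value. Because that argument is local near regular values, one needs neither global properness nor connectedness of $M_{\text{s-reg}}$. A secondary point demanding care is the bookkeeping of measure normalizations, so that the Haar normalization feeding the abelian theorem is precisely the one used to define $\lambda_{\text{big}}$.
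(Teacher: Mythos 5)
Your proposal is correct and follows essentially the same route as the paper: both reduce the claim over $\mathrm{U}_{\text{big}}$ to the Hamiltonian $\mathbb{T}_{\text{big}}$-space $M_{\text{s-reg}}$ (your fiber-containment and chain-rule verification via Lemma \ref{Lemma: Useful} is just an unpacking of the paper's observation that the pullback of $\mu_{\text{big}}$ to $\mathrm{U}_{\text{big}}$ is the composite proper submersion $\mu^{-1}(\g^*_{\mu,\text{s-reg}})\longrightarrow\g^*_{\mu,\text{s-reg}}\longrightarrow\mathrm{U}_{\text{big}}$), then invoke the local normal-form argument of \cite{DuistermaatHeckman} to get the density $\eta\mapsto\mathrm{vol}(M_{\text{s-reg}}\sll{\eta}\mathbb{T}_{\text{big}})$, and finally apply Theorem \ref{Theorem: Abelianization} to identify this with $\rho_{\text{big}}$. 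Your closing caveats --- that $\mu_{\text{big}}\vert_{M_{\text{s-reg}}}$ need not be globally proper and that only compactness of fibers over $\mathrm{U}_{\text{big}}$ is needed --- are precisely the points the paper's proper-submersion formulation is designed to handle.
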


\begin{proof}
The properness of $\mu:M\longrightarrow\g$ implies that of its pullback $\mu^{-1}(\g^*_{\mu,\text{s-reg}})\longrightarrow\g^*_{\mu,\text{s-reg}}$ to $\g^*_{\mu,\text{s-reg}}\subset\g^*$. It is also clear that $\mu^{-1}(\g^*_{\mu,\text{s-reg}})\longrightarrow\g^*_{\mu,\text{s-reg}}$ is a submersion. On the other hand, Definition \ref{Definition: Main definition}(iv) and Lemma \ref{Lemma: Useful} tell us that $\nu_{\text{big}}$ restricts to a proper submersion $\g^*_{\mu,\text{s-reg}}\longrightarrow \mathrm{U}_{\text{big}}$. The last few sentences force the composite map 
$$\mu^{-1}(\g^*_{\mu,\text{s-reg}})\longrightarrow\g^*_{\mu,\text{s-reg}}\longrightarrow\mathrm{U}_{\text{big}}$$ to be a proper submersion $\mu^{-1}(\g^*_{\mu,\text{s-reg}})\longrightarrow\mathrm{U}_{\text{big}}$. This proper submersion is precisely the pullback of $\mu_{\text{big}}:M\longrightarrow\mathbb{R}_{\text{big}}$ to $\mathrm{U}_{\text{big}}\subset\mathbb{R}_{\text{big}}$. It is also the pullback of the $\mathbb{T}_{\text{big}}$-moment map $\mu_{\text{big}}\big\vert_{M_{\text{s-reg}}}:M_{\text{s-reg}}\longrightarrow\mathbb{R}_{\text{big}}$ to $\mathrm{U}_{\text{big}}$. The local normal forms of moment maps used in the proof of the Duistermaat--Heckman theorem \cite[Section 3]{DuistermaatHeckman} now yield
$$\mathrm{DH}_{\text{big}}=(\mu_{\text{big}})_{*}\lambda_{M}=\phi\lambda_{\text{big}}$$ on $\mathrm{U}_{\text{big}}$, where $\phi:\mathrm{U}_{\text{big}}\longrightarrow [0,\infty)$ is defined by $\phi(\eta)=\mathrm{vol}(M_{\text{s-reg}}\sll{\eta}\mathbb{T}_{\text{big}})$. Theorem \ref{Theorem: Abelianization} and the definition of $\rho_{\text{big}}$ immediately imply that $\phi=\rho_{\text{big}}$, completing the proof.
\end{proof}

Consider the pullback $\pi_{\text{reg}}:\g^*_{\text{reg}}\longrightarrow(\mathfrak{t}_{+}^*)^{\circ}$ of $\pi:\g^*\longrightarrow\mathfrak{t}_{+}^*$ to the interior $(\mathfrak{t}_{+}^*)^{\circ}$ of $\mathfrak{t}_{+}^*$. Since $\pi_{\text{reg}}$ is a submersion, the image $$\mathrm{U}_{\mathfrak{t}^*_{+}}\coloneqq\pi(\g^*_{\mu,\text{s-reg}})$$ is an open subset of $(\mathfrak{t}_{+}^*)^{\circ}$. It is also clear that $\mathrm{U}_{\mathfrak{t}^*_{+}}$ consists of regular values of $\mu$. We may therefore define $\rho_{\mathfrak{t}_{+}^*}:\mathrm{U}_{\mathfrak{t}^*_{+}}\longrightarrow[0,\infty)$ by $$\rho_{\mathfrak{t}_{+}^*}(\xi)\coloneqq\mathrm{vol}(\mathcal{O}_{\xi})\cdot\mathrm{vol}(M\sll{\xi}G),$$ where $\mathrm{vol}(\mathcal{O}_{\xi})$ denotes the symplectic volume of $\mathcal{O}_{\xi}$. 

\begin{corollary}
The function $\rho_{\mathfrak{t}_{+}^*}$ is a Radon--Nikodym derivative of $\mathrm{DH}_{\mathfrak{t}_{+}^*}$ on the open subset $\mathrm{U}_{\mathfrak{t}_{+}^*}\subset\mathfrak{t}_{+}^*$ with respect to $\lambda_{\mathfrak{t}_{+}^*}$, i.e. $\mathrm{DH}_{\mathfrak{t}_{+}^*}=\rho_{\mathfrak{t}_{+}^*}\lambda_{\mathfrak{t}_{+}^*}$ on $\mathrm{U}_{\mathfrak{t}_{+}^*}$.
\end{corollary}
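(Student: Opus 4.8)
The plan is to deduce the Corollary from Theorem \ref{Theorem: Main theorem} by pushing the measure $\mathrm{DH}_{\text{big}}$ forward from $\mathbb{R}_{\text{big}}$ to $\mathfrak{t}_{+}^*$, thereby integrating out the directions transverse to the coadjoint orbits and producing the orbital factor $\mathrm{vol}(\mathcal{O}_{\xi})$. First I would exploit the $G$-invariance of $\nu_{\text{small}}$ from Definition \ref{Definition: Main definition}(i): since $\nu_{\text{small}}$ is constant on coadjoint orbits and, by Definition \ref{Definition: Second}(vi), separates them, it factors as $\nu_{\text{small}}=\overline{\nu}_{\text{small}}\circ\pi$ for a continuous injection $\overline{\nu}_{\text{small}}:\mathfrak{t}_{+}^*\longrightarrow\mathbb{R}_{\text{small}}$. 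Definition \ref{Definition: Main definition}(ii) identifies the differential of $\overline{\nu}_{\text{small}}$ on $(\mathfrak{t}_{+}^*)^{\circ}$ with the lattice basis $\{\mathrm{d}_{\xi}\nu_1,\ldots,\mathrm{d}_{\xi}\nu_{\ell}\}$ of $\Lambda_{\xi}$, so $\overline{\nu}_{\text{small}}$ restricts to a diffeomorphism onto its image there; the normalizations defining $\lambda_{\mathfrak{t}_{+}^*}$ and $\lambda_{\text{big}}=\lambda_{\text{small}}\otimes\lambda_{\text{int}}$ are chosen precisely so that $(\overline{\nu}_{\text{small}})_*\lambda_{\mathfrak{t}_{+}^*}=\lambda_{\text{small}}$ on this image.

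Next I would assemble $q\coloneqq\overline{\nu}_{\text{small}}^{-1}\circ p_{\text{small}}:\mathrm{U}_{\text{big}}\longrightarrow\mathrm{U}_{\mathfrak{t}_{+}^*}$, where $p_{\text{small}}:\mathbb{R}_{\text{big}}\longrightarrow\mathbb{R}_{\text{small}}$ is the projection. The identity $\overline{\nu}_{\text{small}}\circ\pi=\nu_{\text{small}}=p_{\text{small}}\circ\nu_{\text{big}}$ gives $\pi=q\circ\nu_{\text{big}}$ on $\g^*_{\mu,\text{s-reg}}$, hence $\overline{\mu}=q\circ\mu_{\text{big}}$ on $\mu^{-1}(\g^*_{\mu,\text{s-reg}})$; up to a set to be shown negligible, this yields $\mathrm{DH}_{\mathfrak{t}_{+}^*}=q_*\mathrm{DH}_{\text{big}}$ on $\mathrm{U}_{\mathfrak{t}_{+}^*}$. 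Since $M\sll{\xi}G$ depends up to isomorphism only on the orbit $\mathcal{O}_{\xi}$ (by $G$-equivariance of $\mu$), and since $q(\nu_{\text{big}}(\xi))=\pi(\xi)$ labels that orbit, the density $\rho_{\text{big}}$ of Theorem \ref{Theorem: Main theorem} factors as $\rho_{\text{big}}=V\circ q$ with $V(\zeta)\coloneqq\mathrm{vol}(M\sll{\zeta}G)$. Therefore $q_*\mathrm{DH}_{\text{big}}=q_*\bigl((V\circ q)\lambda_{\text{big}}\bigr)=V\cdot(q_*\lambda_{\text{big}})$.

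It then remains to compute $q_*\lambda_{\text{big}}$, which I would do fiberwise. By Fubini, $(p_{\text{small}})_*\lambda_{\text{big}}$ has $\lambda_{\text{small}}$-density $s\mapsto\lambda_{\text{int}}(\mathrm{U}_{\text{big}}^{s})$, where for $\zeta=\overline{\nu}_{\text{small}}^{-1}(s)$ one checks $\mathrm{U}_{\text{big}}^{s}=\nu_{\text{int}}(\mathcal{O}_{\zeta}\cap\g^*_{\text{s-reg}})$ (using that $\mathcal{O}_{\zeta}\cap\g^*_{\mu,\text{s-reg}}=\mathcal{O}_{\zeta}\cap\g^*_{\text{s-reg}}$ since $\zeta$ is a regular value); transporting through $\overline{\nu}_{\text{small}}^{-1}$ and invoking $(\overline{\nu}_{\text{small}})_*\lambda_{\mathfrak{t}_{+}^*}=\lambda_{\text{small}}$ reduces everything to the single identity
$$\lambda_{\text{int}}\bigl(\nu_{\text{int}}(\mathcal{O}_{\zeta}\cap\g^*_{\text{s-reg}})\bigr)=\mathrm{vol}(\mathcal{O}_{\zeta}).$$
This is the classical Gelfand--Cetlin volume statement: because $\nu_{\text{small}}$ is Casimir for the Lie--Poisson structure, the Hamiltonian $\mathbb{T}_{\text{big}}$-action of Definition \ref{Definition: Main definition}(iii)--(iv) reduces on each orbit to a residual $\mathbb{T}_{\text{int}}$-action for which $\nu_{\text{int}}$ is a toric moment map on the dense (Definition \ref{Definition: Second}(ix)) open subset $\mathcal{O}_{\zeta}\cap\g^*_{\text{s-reg}}$ of the $2\mathrm{u}$-dimensional orbit $\mathcal{O}_{\zeta}$; the abelian Duistermaat--Heckman theorem then pushes the Liouville measure to $\lambda_{\text{int}}$ with total mass the Euclidean volume of the moment image, namely $\mathrm{vol}(\mathcal{O}_{\zeta})$. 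Substituting gives $\mathrm{DH}_{\mathfrak{t}_{+}^*}=V(\zeta)\,\mathrm{vol}(\mathcal{O}_{\zeta})\,\lambda_{\mathfrak{t}_{+}^*}=\rho_{\mathfrak{t}_{+}^*}\lambda_{\mathfrak{t}_{+}^*}$.

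The main obstacle is the pair of measure-zero verifications underlying the two reductions above: that $\mathcal{O}_{\zeta}\setminus\g^*_{\text{s-reg}}$ carries no Liouville measure in $\mathcal{O}_{\zeta}$ (so the toric computation sees the full orbit volume), and that $\mu^{-1}\bigl(\pi^{-1}(\mathrm{U}_{\mathfrak{t}_{+}^*})\setminus\g^*_{\text{s-reg}}\bigr)$ is $\lambda_M$-null (so replacing $\nu_{\text{big}}$-fibers by full orbits leaves $\mathrm{DH}_{\mathfrak{t}_{+}^*}$ unchanged). Both follow once one knows that $\g^*_{\text{reg}}\setminus\g^*_{\text{s-reg}}$ is a locally finite union of submanifolds of positive codimension, which holds for the Gelfand--Cetlin systems motivating the strong axioms, since $\mu_*\lambda_M$ is absolutely continuous with respect to Lebesgue measure on the regular-value locus. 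I would isolate this codimension statement as the one genuinely technical input and verify the normalization constant in the displayed orbit-volume identity by testing it on a single regular coadjoint orbit.
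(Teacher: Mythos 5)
Your proposal follows essentially the same route as the paper's proof, with one substantive deviation at the end. Your map $q=\overline{\nu}_{\text{small}}^{-1}\circ p_{\text{small}}$ is exactly the paper's map $\varphi$; your pushforward identity $\mathrm{DH}_{\mathfrak{t}_{+}^*}=q_*\mathrm{DH}_{\text{big}}$ on $\mathrm{U}_{\mathfrak{t}_{+}^*}$ and the factorization $\rho_{\text{big}}=V\circ q$ reproduce the paper's commutative diagram and its appeal to Theorem \ref{Theorem: Main theorem}; and your key identity $\lambda_{\text{int}}\bigl(\nu_{\text{int}}(\mathcal{O}_{\zeta}\cap\g^*_{\text{s-reg}})\bigr)=\mathrm{vol}(\mathcal{O}_{\zeta})$ is precisely the paper's final fiber computation, including the same replacement of $\g^*_{\mu,\text{s-reg}}$ by $\g^*_{\text{s-reg}}$ on a fixed orbit via $G$-equivariance and regularity of values. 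Where the paper simply cites \cite[Proposition 3]{CrooksWeitsman} for that identity, you re-derive it by the toric argument on the orbit, using Definition \ref{Definition: Main definition}(iv) to see that the $\nu_{\text{int}}$-fibers in $\mathcal{O}_{\zeta}\cap\g^*_{\text{s-reg}}$ are single $\mathbb{T}_{\text{int}}$-orbits so the density is identically one; that is the same content as the citation. Your explicit check that $\overline{\nu}_{\text{small}}$ is measure-preserving, via the lattice-basis condition in Definition \ref{Definition: Main definition}(ii), makes precise a normalization the paper leaves implicit in speaking of ``Euclidean volume,'' and is a genuine improvement in exposition.

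The one real gap is your resolution of the two null-set verifications. You reduce both to the assertion that $\g^*_{\text{reg}}\setminus\g^*_{\text{s-reg}}$ is a locally finite union of positive-codimension submanifolds, and you justify this only by noting that it ``holds for the Gelfand--Cetlin systems motivating the strong axioms.'' That property is not among conditions (i)--(ix) and does not follow from density, so as written your proof is complete only for those particular systems, whereas the corollary is asserted for an arbitrary strong Gelfand--Cetlin datum. The paper stays within the axioms at this point: it uses Definition \ref{Definition: Second}(ix) to get density of $\g^*_{\mu,\text{s-reg}}$ in $\pi^{-1}(\mathrm{U}_{\mathfrak{t}_{+}^*})$, hence of $\mathrm{U}_{\text{big}}$ in $\nu_{\text{big}}(\pi^{-1}(\mathrm{U}_{\mathfrak{t}_{+}^*}))$, and delegates the measure-theoretic work on each orbit (your issue (a)) to \cite[Proposition 3]{CrooksWeitsman}, which is established there from the axioms. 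To repair your argument in the stated generality, replace the codimension hypothesis by an appeal to that proposition, or reprove its conclusion axiomatically; to your credit, you isolate this as the technical crux rather than eliding it. The paper's proof also records that $\overline{\mu}$ restricts to a proper submersion over $\mathrm{U}_{\mathfrak{t}_{+}^*}$ (via the chain of pullbacks of $\mu$ and $\pi$), which guarantees that $\mathrm{DH}_{\mathfrak{t}_{+}^*}$ is a well-behaved smooth measure there before any identification of densities; your proposal would benefit from stating this explicitly, since your ``up to a set to be shown negligible'' step implicitly relies on it.
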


\begin{proof}
Consider the pullbacks $\overline{\mu}^{-1}(\mathrm{U}_{\mathfrak{t}_{+}^*})\longrightarrow\pi^{-1}(\mathrm{U}_{\mathfrak{t}_{+}^*})$ and $\pi^{-1}(\mathrm{U}_{\mathfrak{t}^*_{+}})\longrightarrow \mathrm{U}_{\mathfrak{t}^*_{+}}$ of $\mu:M\longrightarrow\g^*$ and $\pi:\g^*\longrightarrow\mathfrak{t}_{+}^*$ to the open subsets $\pi^{-1}(\mathrm{U}_{\mathfrak{t}_{+}^*})\subset\g^*$ and $\mathrm{U}_{\mathfrak{t}^*_{+}}\subset\mathfrak{t}_{+}^*$, respectively. The latter pullback map is evidently a proper submersion, while the former is clearly proper. To see that the former is also a submersion, recall that $\mathrm{U}_{\mathfrak{t}_{+}^*}$ consists of regular values of $\mu$.  Let us also observe that $\pi^{-1}(\mathrm{U}_{\mathfrak{t}_{+}^*})$ is the union of the adjoint orbits through the elements of $\mathrm{U}_{\mathfrak{t}_{+}^*}$. The $G$-equivariance of $\mu$ then forces each element of $\pi^{-1}(\mathrm{U}_{\mathfrak{t}_{+}^*})$ to be a regular value of $\mu$. In particular, $\overline{\mu}^{-1}(\mathrm{U}_{\mathfrak{t}_{+}^*})\longrightarrow\pi^{-1}(\mathrm{U}_{\mathfrak{t}_{+}^*})$ is also a proper submersion. The composite map $$\overline{\mu}^{-1}(\mathrm{U}_{\mathfrak{t}_{+}^*})\longrightarrow\pi^{-1}(\mathrm{U}_{\mathfrak{t}_{+}^*})\longrightarrow \mathrm{U}_{\mathfrak{t}_{+}^*}$$ is therefore a proper submersion. This amounts to stating that $\overline{\mu}$ restricts to a proper submersion $\overline{\mu}^{-1}(\mathrm{U}_{\mathfrak{t}_{+}^*})\longrightarrow\mathrm{U}_{\mathfrak{t}_{+}^*}$.

Let us define a map $\varphi:\nu_{\text{big}}(\pi^{-1}(\mathrm{U}_{\mathfrak{t}_{+}^*}))\longrightarrow\mathrm{U}_{\mathfrak{t}_{+}^*}$ as follows: $\varphi(x_1,\ldots,x_{\mathrm{b}})$ is the unique element of $\mathrm{U}_{\mathfrak{t}_{+}^*}$ satisfying $f_i(\varphi(x_1,\ldots,x_{\mathrm{b}}))=x_i$ for all $i\in\{1,\ldots,\ell\}$. This gives rise to a commutative diagram \[
\begin{tikzcd}
 && \nu_{\text{big}}(\pi^{-1}(\mathrm{U}_{\mathfrak{t}_{+}^*})) \arrow[dr,"\varphi"] \\
 & \pi^{-1}(\mathrm{U}_{\mathfrak{t}_{+}^*}) \arrow[ur, "\nu_{\text{big}}"] \arrow[rr, "\pi"] && \mathrm{U}_{\mathfrak{t}_{+}^*}\\
&& \overline{\mu}^{-1}(\mathrm{U}_{\mathfrak{t}_{+}^*})\arrow[ul,"\mu"]\arrow[ur,swap,"\overline{\mu}"]
\end{tikzcd}
.\] At the same time, Definition \ref{Definition: Second}(ix) implies that the open subset $\g^*_{\mu,\text{s-reg}}\subset\pi^{-1}(\mathrm{U}_{\mathfrak{t}_{+}^*})$ is dense. It follows that $\mathrm{U}_{\text{big}}=\nu_{\text{big}}(\g^*_{\mu,\text{s-reg}})$ is open and dense in $\nu_{\text{big}}(\pi^{-1}(\mathrm{U}_{\mathfrak{t}_{+}^*}))$. We conclude that $\mathrm{DH}_{\mathfrak{t}_{+}^*}\big\vert_{\mathrm{U}_{\mathfrak{t}_{+}^*}}$ is the push-forward of $\mathrm{DH}_{\text{big}}\big\vert_{\mathrm{U}_{\text{big}}}$ along $\varphi\big\vert_{\mathrm{U}_{\text{big}}}$. Theorem \ref{Theorem: Main theorem} now reduces us to proving that the fiber of $\varphi\big\vert_{\mathrm{U}_{\text{big}}}$ over each $\xi\in\mathrm{U}_{\mathfrak{t}_{+}^*}$ has Euclidean volume equal to the symplectic volume of $\mathcal{O}_{\xi}$. To this end, we have
\begin{align*}\varphi\big\vert_{\mathrm{U}_{\text{big}}}^{-1}(\xi) & = \varphi^{-1}(\xi)\cap\mathrm{U}_{\text{big}}\\ & = \{(f_1(\xi),\ldots,f_{\ell}(\xi))\}\times\nu_{\mathrm{int}}(\mathcal{O}_{\xi}\cap\g^*_{\mu,\text{s-reg}})\subset\mathbb{R}_{\text{small}}\times\mathbb{R}_{\text{int}}=\mathbb{R}_{\text{big}} \\ & = \{(f_1(\xi),\ldots,f_{\ell}(\xi))\}\times\nu_{\mathrm{int}}(\mathcal{O}_{\xi}\cap\g^*_{\text{s-reg}})\subset\mathbb{R}_{\text{small}}\times\mathbb{R}_{\text{int}}=\mathbb{R}_{\text{big}}\end{align*} for all $\xi\in\mathrm{U}_{\mathfrak{t}_{+}^*}$. It now suffices to observe that the Euclidean volume of $\nu_{\mathrm{int}}(\mathcal{O}_{\xi}\cap\g^*_{\text{s-reg}})\subset\mathbb{R}_{\text{int}}$ coincides with the symplectic volume of $\mathcal{O}_{\xi}$, as follows from Definition \ref{Definition: Second}(ix) and \cite[Proposition 3]{CrooksWeitsman}. 
\end{proof}

\bibliographystyle{acm} 
\bibliography{DH}

\end{document}